\numberwithin{equation}{section}
\newtheorem{theorem}[equation]{Theorem}
\newtheorem{corollary}[equation]{Corollary}
\newtheorem{lemma}[equation]{Lemma}
\newtheorem{conjecture}[equation]{Conjecture}
\newtheorem {problem}{Problem}
\newtheorem {remark}[equation]{Remark}
\theoremstyle{definition}
\newtheorem{definition}[equation]{Definition}
\newtheorem{example}[equation]{Example}
\def\a{\alpha}
\def\R{\mathbb R}
\def\N{\mathbb N}
\def\Z{\mathbb Z}
\def\cP{\mathcal{P}}
\def\Newt{\operatorname{Newt}}
\def\Ext{\operatorname{Ext}}
\def\conv{\operatorname{conv}}
\def\Supp{\operatorname{Supp}}
\def\LP{\operatorname{LP}}
\def\MLP{\operatorname{MLP}}
\definecolor{applegreen}{rgb}{0.55,0.71,0.0}
\definecolor{darkpink}{RGB}{245,75,200}
\def\showlineno{line \the\inputlineno}
\def\multiset#1#2{\ensuremath{\left(\kern-.3em\left(\genfrac{}{}{0pt}{}{#1}{#2}\right)\kern-.3em\right)}}
\definecolor{linenogrey}{RGB}{150,150,150}
\def\Newt{\operatorname{Newt}}
\def\Ext{\operatorname{Ext}}
\title{IDP for 2-Partition Maximal Symmetric Polytopes}
\author{Su Ji Hong}
\address{Department of Applied Mathematics and Statistics, Johns Hopkins University} \email{shong75@jh.edu}
\author{George D. Nasr}
\address{Department of Mathematics, Augustana University} \email{george.nasr@augie.edu}
\date{}
\begin{document}

\maketitle

\begin{abstract}
We provide a framework for which one can approach showing the integer decomposition property for symmetric polytopes. We utilize this framework to prove a special case which we refer to as $2$-partition maximal polytopes in the case where it lies in a hyperplane of $\R^3$. Our method involves proving a special collection of polynomials have saturated Newton polytope.
\end{abstract}

\section{Introduction}
We say a convex polytope is a \textit{lattice polytope} whenever it is the convex hull of integer points. Lattice polytopes have been studied under many different settings, such as mathematical optimization \cite{t17}, projective toric varieties \cite{f93,c24}, and tropical geometry \cite{k21}. This work concerns that relating to the {integer decomposition property} (IDP). This property has been studied for a variety of reasons. For example, it often is intertwined with the study of Ehrhart polynomials for polytopes. 
% , which is intertwined in the study of Ehrhart polynomials.  

A common case of study is the {Newton polytope} of a polynomial. In this setting, one can ask if the corresponding polynomial has {saturated Newton polytope} (SNP). There is an established history of finding polynomials which are SNP---a survey can be found in \cite{mty19}. A classic example of this includes {Schur polynomials}, which will be relevant for this paper.

As is often the case, demonstrating that a polynomial has SNP in turn can be used to argue that the corresponding Newton polytope has IDP. This technique has been used more frequently in recent years---for instance, one can see \cite{hong20} for a study on Schur polynomials and inflated symmetric Grothendieck polynomials, which \cite{n23} generalized this and other results by defining what they call \textit{good symmetric functions}. Both cases leverage the SNP property of a polynomial to prove a corresponding Newton polytope has IDP. 

% example: Schur polynomial, SGP, iSGP, good polyonmials. 

The work of \cite{n23} involved adding together Schur functions, which has a symmetric Newton polytope. When we say a polytope $\cP\subseteq \R^n$ is \textit{symmetric}, we mean that for any for any permutation $\pi$ on $n$ elements, we have $\pi \cP=\cP$, where $\pi$ acts on the coordinates of points in $ \cP$.  This leads us to wonder the following. 

\begin{problem}\label{problem:main}
    Do all symmetric lattice polytopes have IDP? If not, which ones do?
\end{problem}

% \suji{Did we ever confirm that symmetric lattice polytope is the right term for the types of polytopes we want to look at? I see that we define it in section 2. Should we mention that we will define it later? 
% \GDN{No we never confirmed this. In the second sentence before problem 1, we actually say ``symmetric Newton polytope", so already there we are assuming some common definition of what symmetric means. Maybe we should, right after that, add ``, by which we mean..." and then add that definition from section 2? \suji{ I think we should do that.}}

% }
% study into related questions has already started. As one particular example, Oda conjectured every smooth lattice polytope has IDP \cite{o08}. A special case of this conjecture was proved by \cite{b19} in the case where the polytopes were centrally symmetric and three-dimensional. 

In this report, we focus on a special case of the aforementioned problem when the polytopes are what we call \textit{2-partition maximal polytopes}. For now, we wait to define these until the the next section. By using the aforementioned strategy, we identify the symmetric polynomials whose sum gives rise to 2-partition maximal symmetric polytopes and then prove the following. 

\begin{theorem}\label{thm:main}
If a 2-partition maximal symmetric polytope is contained in a hyperplane of $\R^3$, then it has IDP.
\end{theorem}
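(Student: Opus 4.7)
My plan is to follow the framework alluded to in the introduction: realize the $2$-partition maximal symmetric polytope $\cP \subseteq \R^3$ as the Newton polytope of an explicit symmetric polynomial $f$, and then deduce IDP from the saturated Newton polytope property of a family of such polynomials. First, I would pin down exactly which symmetric polynomials give rise to $2$-partition maximal polytopes in this setting. Based on the introduction's emphasis on sums of Schur polynomials and "good symmetric functions" in the sense of \cite{n23}, the natural candidate is a sum $f = \sum_{\lambda} s_\lambda(x_1,x_2,x_3)$ over a prescribed collection of partitions $\lambda$ with at most two parts, and I would verify directly that $\Newt(f) = \cP$.

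The next step is to observe that since $\cP$ is symmetric and lies in a hyperplane of $\R^3$, it is homogeneous in $x_1,x_2,x_3$ and $2$-dimensional. Hence $\cP$ is a $S_3$-symmetric lattice polygon in the plane $x_1+x_2+x_3 = c$, whose possible shapes are extremely restricted (essentially triangles, hexagons, and their degenerations). The plan is to exploit this classification: verifying IDP reduces to showing, for each of a small number of combinatorial types, that every lattice point of $k\cP$ is a sum of $k$ lattice points of $\cP$. To reach this conclusion via the polynomial framework, I would prove the stronger statement that $f^k$ has SNP for every positive integer $k$. Since $\Newt(f^k) = k\Newt(f) = k\cP$, and since $f^k$ being SNP means every lattice point of its Newton polytope appears as an exponent in the monomial expansion of $f^k$, this immediately yields IDP for $\cP$.

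The bulk of the argument, and the main obstacle, will be establishing SNP for the family $\{f^k\}_{k \geq 1}$. For a single Schur polynomial, SNP is classical, but here $f$ is a sum of Schurs and products of such sums are more delicate: in the monomial expansion of $f^k$, individual contributions can cancel, so one cannot simply invoke SNP of each factor. I would attack this by using the symmetry of $f$ together with the two-variable restriction of each summand (each $\lambda$ has at most two parts), which ought to allow an explicit combinatorial model such as a tuple of semistandard Young tableaux or lattice paths indexing the monomials of $f^k$. The key technical lemma will produce, for every lattice point $\m \in k\cP$, an explicit tuple of tableaux whose column-weight reads off $\m$, together with a \emph{sign} or \emph{parity} argument showing the contribution is nonzero. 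The $\R^3$ / hyperplane hypothesis is critical here: with only three variables and a two-dimensional polytope, the combinatorics reduces to planar case work that can be exhausted, whereas in higher dimensions the tableau bookkeeping and cancellation analysis become much harder.

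Finally, once SNP of $f^k$ is established for all $k$, I would conclude the theorem by invoking the standard SNP-to-IDP passage: any lattice point $\m \in k\cP$ is an exponent of $f^k$, hence a sum of $k$ exponents of $f$, hence a sum of $k$ lattice points of $\cP$. This completes the proof of Theorem~\ref{thm:main}, modulo the SNP computation which is the real content.
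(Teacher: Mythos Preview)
Your proposal has a structural gap: the passage through ``$f^k$ has SNP'' is not a genuine reduction but a restatement of IDP. Since $f = s_\lambda + s_\mu$ has non-negative coefficients (Kostka numbers are non-negative), there is \emph{no} cancellation in $f^k$; your sign/parity concern is misplaced. Hence $\Supp(f^k)$ is exactly the $k$-fold sumset of $\Supp(f)$. Moreover $f$ itself already has SNP: by $2$-partition maximality every partition lattice point of $\cP$ is dominated by $\lambda$ or $\mu$, hence lies in $\Supp(s_\lambda)\cup\Supp(s_\mu)$, and by symmetry the same holds for all lattice points. Thus $\Supp(f)=\cP\cap\Z^3$, and the statement ``$f^k$ has SNP'' says precisely that the $k$-fold sumset of $\cP\cap\Z^3$ is all of $k\cP\cap\Z^3$ --- which is the definition of IDP. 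Your proposed method (producing, for each lattice point $\pi\in k\cP$, a $k$-tuple of tableaux with the right total content) is then exactly a bare-hands construction of the IDP decomposition; the polynomial language contributes nothing.

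The paper avoids this circularity by working with a \emph{different} polynomial: instead of $f^t$ it uses
\[
ts \;:=\; \sum_{i=0}^{t} s_{(t-i)\lambda + i\mu},
\]
a sum of $t+1$ specific Schur polynomials rather than a product. The advantage is that proving $ts$ has SNP only requires, for each $\pi\in t\cP\cap\Z^3$, exhibiting a \emph{single} index $i$ with $\pi$ dominated by $(t-i)\lambda+i\mu$ --- a one-step dominance check, not a decomposition into $t$ pieces. The paper makes this check work via a structural lemma specific to the $\R^3$ hyperplane setting: the two maximal partitions must satisfy $|\lambda_j-\mu_j|=1$ for some coordinate $j$, and this lets one read off the correct $i$ from the $j$th coordinate of $\pi$. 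IDP then follows from a separate, general column-splitting argument showing that any SSYT of shape $(t-i)\lambda+i\mu$ decomposes into $t-i$ SSYT of shape $\lambda$ and $i$ of shape $\mu$. Finally, a small correction: the hyperplane condition in $\R^3$ means $\lambda$ and $\mu$ have the same size and length $3$, not ``at most two parts''.
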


We will provide necessary terminology and background in section \ref{sec:prelim}. In section \ref{sec:method}, we provide a general framework by which one can prove symmetric polytopes are IDP. Finally, in section \ref{sec:thm}, we provide an outline for how we use the method from section \ref{sec:method} to prove Theorem \ref{thm:main}.

\section{Preliminaries}\label{sec:prelim}
Throughout we only work with \textit{lattice polytopes}, which we have defined as the convex hull of integer points. We additionally define the following things for polytopes. Throughout, $\cP$ is a polytope in $\R^n$.

\begin{definition}
\leavevmode
    \begin{itemize}
        \item The \textbf{dilation} of $\cP$ by $t\in \N$ is 
\(t\cP:=\{tx:x\in \cP\}.\)
\item  $\cP$ has the \textbf{integer decomposition property (IDP)} if for all $t\in \N$, $x\in t\cP$ implies $\displaystyle x=\sum_{i=1}^tx_i$ where $x_i\in \cP$. The $x_i$ need not be distinct.
\item The set $\conv(x_1,x_2,\dots, x_n)$ is the convex hull of points $x_1,x_2,\dots, x_n$.
\item Given a polynomial $\displaystyle f(x)=\sum_{\a} c_{\a}x^{\a} \in \R[x_1,x_2,\dots,x_n]$, the \textbf{support} of $f(x)$ is 
\[\Supp(f):=\{\a:c_\a\neq 0\}.\]
\item Given a polynomial $f(x) \in \R[x_1,x_2,\dots,x_n]$, the \textbf{Newton Polytope} is \[\Newt(f):=\conv(\Supp(f)).\]
\item     We say a polynomial $f$ has \textbf{saturated Newton polytope (SNP)} if every point in $\Newt(f)\cap \Z^n$ is an exponent vector in $f$.
% \item $\Ext(\cP)$ is the set of \textbf{extreme points}, or vertices, of $\cP$.
 
    \end{itemize}
\end{definition}

We will be interested in working with Schur polynomials and partitions, so we will additionally need the following. Throughout, $\lambda$ and $\mu$ are partitions.

\begin{definition}\leavevmode
\begin{itemize}
\item Let $T$ be a Young diagram for $\lambda$. A \textbf{semistandard
 Young tableau} $T$ is a filling with entries from $\Z_{>0}$ that is weakly increasing along
 rows and strictly increasing down columns. 
 \item The \textbf{content} of a semistandard Young tableau $T$ is $\alpha=(\alpha_1,\alpha_2,\dots, \alpha_n)$ where $\alpha_i$ is the number of $i$'s appearing in $T$.
    \item The \textbf{Schur polynomial} $s_\lambda\in \Z[x_1,x_2,\dots,x_n]$ is 
    \[\displaystyle s_\lambda=\sum_{\alpha} K_{\lambda,\alpha}x_1^{\alpha_1}x_2^{\alpha_2}\cdots x_n^{\alpha_n}\] where $K_{\lambda,\alpha}$ is the number of semistandard Young tableau of shape $\lambda$ and content $\alpha$. 
    \item If $\lambda$ and $\mu$ are partitions with the same size, then the \textbf{dominance order} $\trianglelefteq$, defined on partitions of the same size, is defined by $\lambda \trianglelefteq \mu$ whenever $\displaystyle \sum_{i=1}^k \lambda_i\leq \sum_{i=1}^k \mu_i$ for all $k$. In this case, we say $\lambda$ is \textbf{dominated} by $\mu$.
    % \item  $\lambda$ and $\mu$ are said to be \textbf{maximal} whenever we have $\lambda \not\trianglelefteq \mu$ and $\mu\not\trianglelefteq \lambda$. In general, a collection of partitions is maximal if every pair is maximal.
\item Given a symmetric polytope $\cP$, let $\LP(\cP)$ be the set of all lattice points of $\cP$ that are partitions and $\MLP(\cP)$ be the set of pairwise $\trianglelefteq$-maximal partitions in $\LP(\cP)$. 

\item For a polytope $\cP$, we let $\Ext(\cP)$ be the set of extreme points of $\cP$. 
\end{itemize}
\end{definition}

% {\color{red}
% Do we need the $\lambda_i$ to be the same size for the following to make sense? We want examples. No, it does make sense, we just need to include all the maximal partitions in the original polytope }

We now define the polytopes of interest to this paper.

%\GDN{SU JI tell me your thoughts on this. I felt we needed to give a name to the case we are studying.}
\begin{definition}
    A symmetric polytope $\mathcal{P}$ is \textit{2-partition maximal} provided there are two partitions $\lambda$ and $\mu$ for which $\MLP(\cP)=\{\lambda,\mu\}$.
    % \begin{enumerate}
    %     \item $\lambda$ and $\mu$ are maximal; and
    %     % \item The size of $\lambda$ is the same as the size of $\mu$; and
    %     \item If $\pi\in \mathcal{P}$, then $\pi$ is dominated by a permutation of $\lambda$ or $\mu$. That is, the only maximal points in $\cP$ are $\lambda$ and $\mu$.
    % \end{enumerate}
\end{definition}

% given $\mathcal{P}$, define $S_{\mathcal{P}}$

% \textbf{Example summary}
\begin{example}\label{ex:one}
    Consider a symmetric polytope $\cP$ whose extreme points are permutations of $(10,2,1)$ and $(7,6,0)$. See Figure \ref{fig:2-partition}. If one draws the integer points in $\cP$, as well as the Newton polytopes for $s_{(10,2,2)}$ and $s_{(7,6,0)}$, one can see that $\MLP(\cP)=\{(10,2,1),(7,7,0)\}$ as any integer point in the polytope is in one of the two Newton polytopes of the aforementioned Schur functions. See Figure \ref{fig:2-partition_lattice}. Thus the polytope is 2-partition maximal. 

    However, a symmetric polytope $\cP'$ whose extreme points are permutations of $(10,2,2)$ and $(7,7,0)$ contains a lattice point $(8,5,1)$. Indeed, observe that 
      \[(8,5,1)=\frac{7}{16}(10,2,2)+ \frac{1}{16}(2,10,2) +\frac{1}{2}(7,7,0).\]
      Furthermore, $\MLP(\cP')=\{(10,2,2), (7,7,0),  (8,5,1)\}$, so this polytope is not 2-partition maximal. See Figure \ref{fig:non-2-partition}.

\begin{figure}[h]
     \centering
     \begin{subfigure}[b]{0.3\textwidth}
         \centering
         \includegraphics[width=\textwidth]{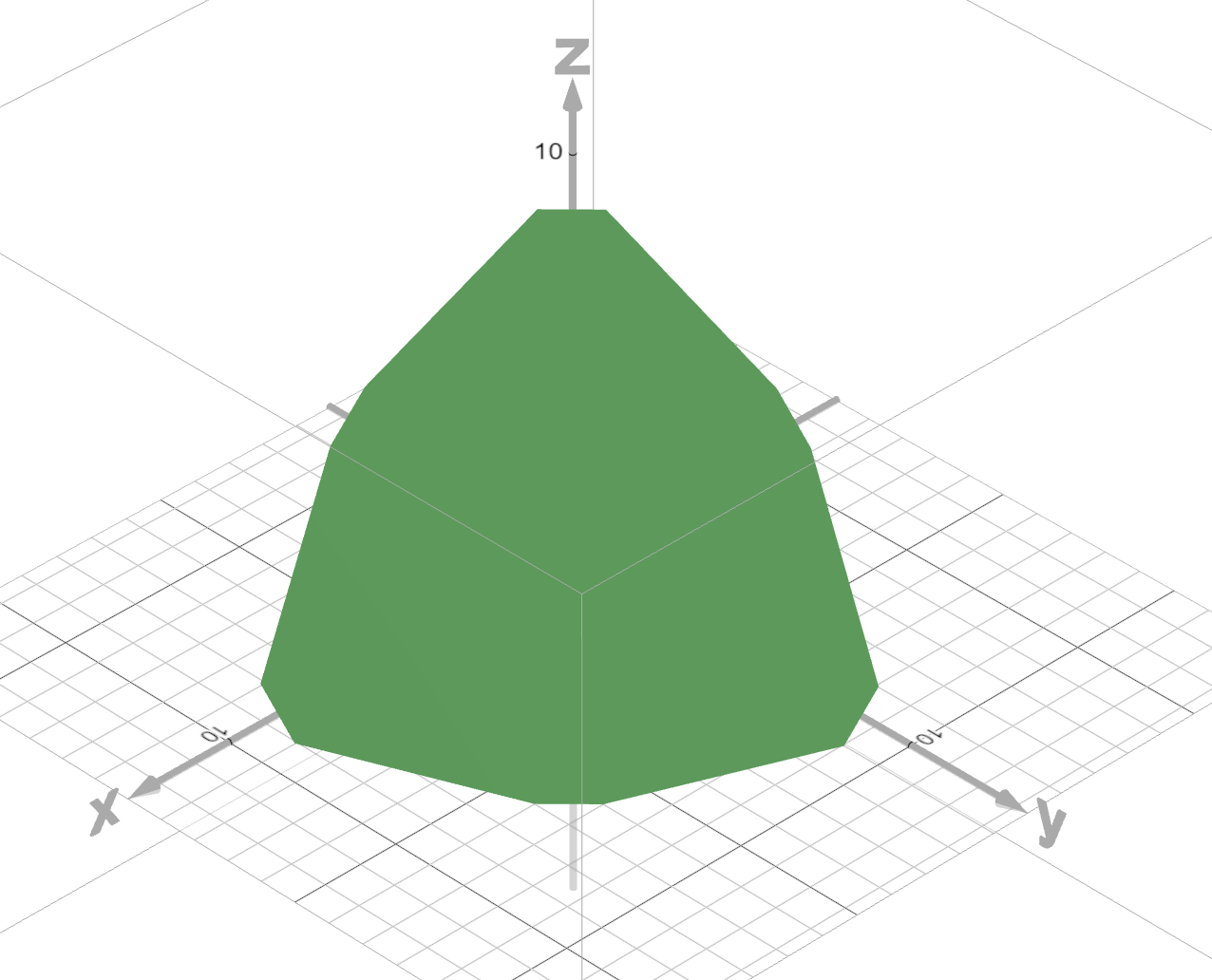}
         \caption{A symmetric polytope with extreme points of $(10,2,1)$ and $(7,6,0)$ and their permutations. }\label{fig:2-partition}
     \end{subfigure}
     \hfill
     \begin{subfigure}[b]{0.3\textwidth}
         \centering
         \includegraphics[width=\textwidth]{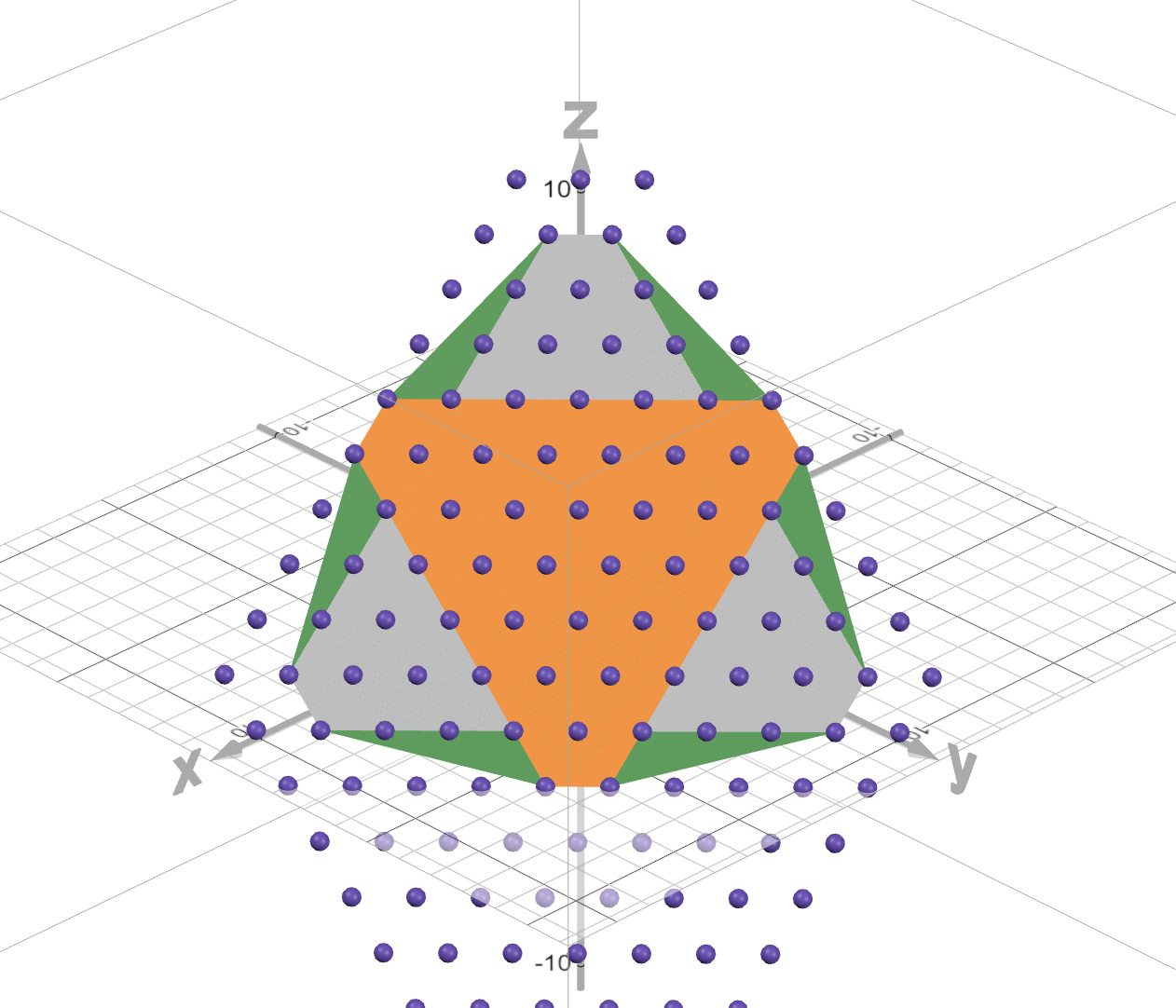}
         \caption{Figure \ref{fig:2-partition} with the Newton polytopes of $s_{(10,2,1)}$ and $s_{(7,6,0)}$ along with the integer points.}\label{fig:2-partition_lattice}
     \end{subfigure}
     \hfill
     \begin{subfigure}[b]{0.3\textwidth}
         \centering
         \includegraphics[width=\textwidth]{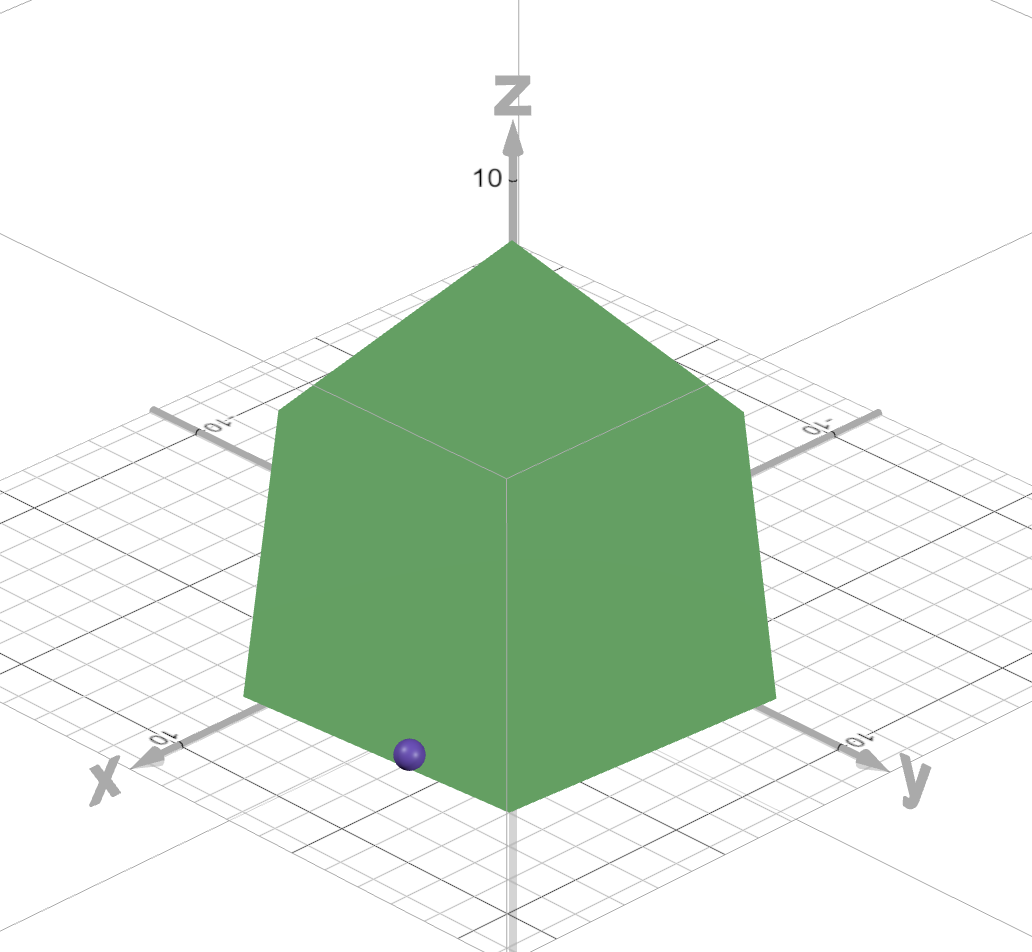}
         \caption{A symmetric polytope with extreme points of $(10,2,2)$ and $(7,7,0)$ and their permutations. The point $(8,5,1)$ lies in the polytope.}\label{fig:non-2-partition}
     \end{subfigure}
        \caption{}
        \label{fig:three graphs}
\end{figure}
\end{example}

\section{Conjecture on Symmetric Polytopes and IDP} \label{sec:method}
 We now discuss how one can extend ideas in \cite{hong20,n23} for our purposes.
% As mentioned in the introduction, there are partial answers to the Problem \ref{problem:main} in the literature.
The authors of \cite{hong20,n23} used certain families of polynomials $s$ which have SNP to show that $\Newt(s)$, which is symmetric, has IDP. To use these same strategies, we define $s$ in the following way.

\begin{lemma}
Let $\displaystyle s:= \sum_{\lambda\in \MLP(\cP)}s_\lambda$. Then $\cP = \Newt(s)$. \label{lem:newton_sum}
\end{lemma}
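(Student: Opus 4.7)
The plan is to argue the two inclusions $\Newt(s) \subseteq \cP$ and $\cP \subseteq \Newt(s)$ separately, leaning on the classical description of the Newton polytope of a Schur polynomial as a permutohedron. Since each $s_\lambda$ has nonnegative integer coefficients, there is no cancellation in the sum, so $\Supp(s) = \bigcup_{\lambda\in\MLP(\cP)} \Supp(s_\lambda)$ and therefore $\Newt(s) = \conv\bigl(\bigcup_{\lambda\in\MLP(\cP)} \Newt(s_\lambda)\bigr)$. Recall also the classical fact (a consequence of Rado's theorem together with the known SNP property of Schur polynomials) that $\Newt(s_\lambda)$ is the permutohedron of $\lambda$, and that $\Supp(s_\lambda)$ consists of exactly those compositions $\alpha$ whose sorted rearrangement $\alpha^+$ satisfies $\alpha^+ \trianglelefteq \lambda$.

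For the inclusion $\Newt(s)\subseteq \cP$, I would note that every $\lambda \in \MLP(\cP)$ lies in $\LP(\cP)\subseteq \cP$, so by symmetry of $\cP$ all permutations of $\lambda$ also lie in $\cP$. Their convex hull is the permutohedron of $\lambda$, which equals $\Newt(s_\lambda)$, and hence lies in $\cP$. Taking the convex hull over all $\lambda \in \MLP(\cP)$ keeps us inside $\cP$ since $\cP$ is convex.

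For the reverse inclusion $\cP\subseteq \Newt(s)$, it suffices to show that every extreme point $v$ of $\cP$ belongs to $\Newt(s)$. Since $\cP$ is a lattice polytope, $v \in \Z^n$, and its sorted rearrangement $v^+$ is a partition in $\LP(\cP)$ by symmetry of $\cP$. I would then show that there exists $\lambda \in \MLP(\cP)$ with $v^+ \trianglelefteq \lambda$: restricting to the finite set of partitions in $\LP(\cP)$ of size $|v^+|$, pick a $\trianglelefteq$-maximal element $\lambda$ above $v^+$; any partition in $\LP(\cP)$ that strictly dominates $\lambda$ would have to share its size, contradicting maximality, so $\lambda \in \MLP(\cP)$. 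By the classical description, $v^+ \trianglelefteq \lambda$ forces $v^+ \in \Supp(s_\lambda)$, and since $\Supp(s_\lambda)$ is permutation-invariant (Schur polynomials are symmetric) we get $v \in \Supp(s_\lambda) \subseteq \Newt(s)$.

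The routine calculations are all standard; the only genuine subtlety is the step that pins down some $\lambda \in \MLP(\cP)$ above a given $v^+$, because dominance order is only defined within a fixed size. Handling this correctly, rather than assuming a single global maximum, is the one place care is needed; once it is in place, everything else reduces to Rado's theorem and the symmetry of $\cP$.
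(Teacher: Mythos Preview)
Your proof is correct and follows essentially the same approach as the paper: both arguments rest on the permutohedral description of $\Newt(s_\lambda)$, the symmetry of $\cP$, and Rado's theorem to pass between $\LP(\cP)$ and $\MLP(\cP)$. The paper organizes the reduction slightly differently (first showing $\cP=\Newt\bigl(\sum_{\lambda\in\LP(\cP)} s_\lambda\bigr)$ and then trimming to $\MLP(\cP)$ via Rado), whereas you work directly with $\MLP(\cP)$ and are a bit more explicit about the fixed-size restriction when locating a dominating $\lambda\in\MLP(\cP)$, but the substance is the same.
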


% This follows from the fact that $\lambda$ and $\mu$ make up the complete collection of pairwise-maximal partitions in $\cP$. This is something other authors take advantage of, as we expand on briefly in the next section.

% Consider instead $t\cP$. While it is true that $\Newt({s_{t\lambda}})\cup \Newt({s_{t\mu}})\subseteq t\cP$, the reverse containment is not generally true. This is to say that together, ${s_{t\lambda}}$ and ${s_{t\mu}}$ do not have enough information together to recover $t\cP$. To employ the methods from the introduction, we will need to find a polynomial whose Newton polytope is $t\cP$. To this end, we have the following. 

 The definition of convex hull and Rado's Theorem \cite{rado} provides justification for why this Lemma holds, which we detail now. Given a symmetric polytope $\cP$ in $\R^m$, if $\lambda$ is a lattice point in $\cP$, then so is $\pi(\lambda)$ for all permutations $\pi$ in $S_m$, the symmetric group on $m$ elements. Then by the definition of Newton polytopes, we see that $\Newt(s_\lambda) \subseteq \cP$. 
% Let $LP(\cP)$ be the set of all lattice points of $\cP$ that are partitions.
Thus, the support of $\displaystyle s=\sum_{\lambda\in \LP(\cP)} s_\lambda$ contains all lattice points of $\cP$, so $\cP=\Newt(s)$. However, by Rado's Theorem \cite{rado}, we see that $\lambda\trianglelefteq\mu$ if and only if $\Newt(s_\lambda) \subseteq \Newt(s_\mu)$. Thus, the support of $s$ is equal to the support of $\displaystyle \sum_{\lambda \in \MLP(\cP)} s_\lambda$.

Returning now to our situation, we see that if $\lambda$ and $\mu$ are the partitions for a 2-partition maximal polytope $\cP$ and $s:=s_\lambda+s_\mu$, then $\Newt(s)=\cP$. 
To employ the methods of \cite{hong20,n23}, we will want to find a polynomial with SNP which is $t\cP$ for each choice of $t$. To this end, we define the following.

\begin{definition}\label{definition:ts}
    Given $k$ partitions $\lambda_1,\lambda_2,\dots,\lambda_k$, not necessarily distinct, let  $s := s_{\lambda_1}+s_{\lambda_2}+\cdots+s_{\lambda_k}$. We define a new function $ts$, for $t\in \Z_{>0}$. Let $K:=\{1,2,\dots, k\}$ and $\multiset{K}{t}$ be the set of multisubsets of $K$ of size $t$. For $I\in \multiset{K}{t}$, let $\lambda_I:=\displaystyle \sum_{i\in I} \lambda_{i}$. Then
    % $tS = S_{\sum_{i=1}^k \lambda_{j_i}}$

    \[ts:=\displaystyle \sum_{I\in \multiset{K}{t}}s_{\lambda_I}.\]

\end{definition}

\begin{remark}
    
Note when $k=2$, $ts$  becomes 
\[\sum_{i=0}^t s_{(t-i)\lambda_1+i\lambda_2}.\]

\end{remark}

% where $MLP(\cP)$ is the set of pairwise maximal partitions in $LP(\cP)$. 

% In the prior section, we defined $ts$ as a means of extending the relationship that $s$ has with $\cP$.
In general, we conjecture the following for this special choice of $ts$ whenever $\cP$ is a symmetric polytope.
\begin{conjecture}\label{conjecture:snp}
    For $t\in \Z_{>0}$, we have $\Newt(ts)=t\Newt(s)$ and thus $t\cP=\Newt(ts)$. Furthermore, $ts$ has SNP for all $t$. 
\end{conjecture}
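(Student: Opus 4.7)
The plan is to split Conjecture \ref{conjecture:snp} into two parts: first the Newton polytope identity $\Newt(ts) = t\cP$, which should follow rather cleanly from Lemma \ref{lem:newton_sum}, Rado's theorem, and symmetry; and second the SNP property of $ts$, which is the substantive content.

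For the identity $\Newt(ts) = t\cP$, I would verify both inclusions. Each $\lambda_I = \sum_{i\in I} \lambda_i$ lies in $t\cP$ by convexity; since $t\cP$ is symmetric, all permutations of $\lambda_I$ lie in $t\cP$, giving $\Newt(s_{\lambda_I})\subseteq t\cP$ for every $I$ and hence $\Newt(ts)\subseteq t\cP$. For the reverse inclusion, every extreme point of $t\cP$ has the form $t\pi(\lambda_i)$ for some $i\in K$ and some permutation $\pi$, because any lattice partition strictly dominated by another would lie in the Newton polytope of its dominator by Rado and hence fail to be extreme. Taking $I$ to be the multiset consisting of $t$ copies of $i$ yields $\lambda_I = t\lambda_i$, so $t\pi(\lambda_i)\in \Supp(s_{t\lambda_i})\subseteq \Supp(ts)$; this forces $\Ext(t\cP)\subseteq \Newt(ts)$ and the reverse inclusion follows by convexity.

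For the SNP claim I would reduce to partitions: both $t\cP$ and $\Supp(ts)$ are symmetric, so it suffices to show that every lattice partition $\nu\in t\cP$ lies in $\Supp(s_{\lambda_I})$ for some $I$. By Rado's theorem this is equivalent to the purely combinatorial statement that for every lattice partition $\nu\in t\cP$ there exists $I\in \multiset{K}{t}$ with $\nu\trianglelefteq \lambda_I$; equivalently, every $\trianglelefteq$-maximal lattice partition of $t\cP$ must equal $\lambda_I$ for some $I$. My proposed attack is induction on $t$. The base case $t=1$ is immediate from the definition of $\MLP(\cP)$. For the inductive step, given a maximal lattice partition $\nu\in t\cP$, express $\nu/t$ as a convex combination of permutations of the $\lambda_i$'s and read off marginals $m_i \geq 0$ with $\sum_i m_i = 1$ recording the total weight placed on permutations of $\lambda_i$. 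The goal is then to round $(tm_1,\dots,tm_k)$ to a nonnegative integer vector summing to $t$, producing an $I \in \multiset{K}{t}$, and to show $\nu \trianglelefteq \lambda_I$ for this choice. Equivalently, one can attempt to peel off a single $\lambda_j$ with $m_j > 0$, verify that the sorted residual $\nu - \lambda_j$ is a lattice partition in $(t-1)\cP$, and invoke the inductive hypothesis; the classical majorization correspondence with doubly stochastic matrices (Hardy--Littlewood--P\'olya) is the natural tool to keep the residual a partition rather than merely a lattice point.

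The hard part will be controlling this peeling when several $\lambda_i$'s are pairwise incomparable in dominance: naive choices of $j$ need not produce a residual in $(t-1)\cP$, and greedy rounding can fail because the $\lambda_i$'s may live on different faces of $\cP$. This is precisely the obstacle that leads the paper to restrict Theorem \ref{thm:main} to $2$-partition maximal polytopes inside a hyperplane of $\R^3$, where $k=2$ collapses $\multiset{K}{t}$ to a one-dimensional family interpolating $t\lambda_1$ and $t\lambda_2$ and the low-dimensional geometry makes the rounding step tractable by direct casework. A proof in full generality will likely require either a finer invariant than dominance alone or a global argument that bypasses the inductive peeling entirely.
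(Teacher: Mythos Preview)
The statement you are attempting is labeled a \emph{conjecture} in the paper; there is no proof of it in full generality. What the paper actually proves is the special case recorded as Theorem~\ref{thm:snp}: $ts$ has SNP when $\cP$ is $2$-partition maximal inside a hyperplane of $\R^3$. So the right comparison is between your outline and that special-case argument, together with the honest observation that nothing in the paper settles the general case.

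Your treatment of the identity $\Newt(ts)=t\cP$ is correct and is in fact a genuine proof of that half of the conjecture: the forward inclusion via convexity and symmetry is fine, and your reverse inclusion is exactly the right use of Rado's theorem (sorted extreme points of $\cP$ must lie in $\MLP(\cP)$, hence every vertex of $t\cP$ equals $\pi(t\lambda_i)$ and already sits in $\Supp(s_{t\lambda_i})$). The paper does not isolate this as a lemma, so here you are supplying something the paper leaves implicit.

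For the SNP half, your proposal is an approach, not a proof, and you say so yourself. The inductive peeling step---choose some $\lambda_j$ with positive marginal, subtract, resort, and stay inside $(t-1)\cP$---is exactly where the argument breaks: there is no mechanism guaranteeing that the sorted residual remains in $(t-1)\cP$, and the Hardy--Littlewood--P\'olya correspondence only controls majorization by a \emph{single} partition, not containment in a polytope cut out by several incomparable ones. This is a genuine gap, not a detail to be filled in.

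It is worth noting that even in the special case the paper does \emph{not} proceed by your inductive peeling. Instead, Lemma~\ref{lem:same_size_differ_1} forces $|\lambda_j-\mu_j|=1$ for some coordinate $j$ (say $j=3$), Lemma~\ref{lem:dec} confines the relevant decreasing lattice points to the triangle $\conv(t\lambda,t\mu,t\iota)$, and then Theorem~\ref{thm:snp} writes down directly the index $i=t\lambda_3-z$ and checks by hand that $(x,y,z)\trianglelefteq (t-i)\lambda+i\mu$. So the paper's method is a one-shot dominance verification exploiting the unit-difference constraint, not an induction on $t$; your peeling idea, even if it could be repaired for $k=2$, is a different route from the one the paper takes.
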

%The following is an outline of a general method which one may be able to use to demonstrate a symmetric polytope $\cP$ has IDP.
%\begin{enumerate}
%    \item Write down a sum $s:=\sum_{\lambda} s_\lambda$ of Schur polynomials so that the collection of partitions $\lambda$ are pairwise maximal and $\Newt(s)=\cP$.
 %   \item  For $t\in \N$, construct $ts$, which is a polynomial so that we have $\Newt(ts)=t\Newt(s)$ and $t\cP=\Newt(ts)$.
%\item Demonstrate $ts$ has SNP for all $t$.
%\end{enumerate}

If this conjecture is true, then the set of lattice points of $t\cP$ is the support of $ts$. Then we can apply the following.

\begin{theorem}\label{thm:IDP}
    Let $\cP$ be a symmetric polytope for which the prior conjecture holds true for $\displaystyle s= \sum_{\lambda\in \MLP(\cP)} s_\lambda$. (That is, $s$ has SNP.) Then $\cP$ has the integer decomposition property. 
\end{theorem}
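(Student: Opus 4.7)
The plan is to leverage the conjectured SNP of $ts$ together with two classical facts about Schur polynomials: that permutohedra of partitions are closed under Minkowski sums, and that products of Schur polynomials have SNP.

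First, fix any $t \in \Z_{>0}$ and any lattice point $x \in t\cP \cap \Z^n$. The conjecture asserts $\Newt(ts) = t\cP$ and that $ts$ has SNP, so $x$ is an exponent vector of $ts$. Writing $\MLP(\cP) = \{\lambda_1, \ldots, \lambda_k\}$ and unraveling Definition \ref{definition:ts}, there exists a multisubset $I = \{i_1, \ldots, i_t\} \in \multiset{K}{t}$ with $x \in \Supp(s_{\lambda_I})$, where $\lambda_I = \lambda_{i_1} + \cdots + \lambda_{i_t}$.

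Next, I would split $x$ into $t$ lattice summands, one from each $\Newt(s_{\lambda_{i_j}})$. This rests on two inputs. First, for partitions (padded to a common length), the permutohedron identity $\Newt(s_{\lambda_I}) = \sum_{j=1}^t \Newt(s_{\lambda_{i_j}})$ holds as a Minkowski sum, because all summand permutohedra are extremized in a common direction by each linear functional. Second, the product $\prod_{j=1}^t s_{\lambda_{i_j}}$ itself has SNP (see \cite{mty19}). Since products of Schur polynomials have nonnegative integer coefficients, the support of this product equals the set of all sums of lattice points drawn from the individual supports; SNP of the product then forces this set to fill the entire Minkowski sum. In particular, our $x$ decomposes as $x = x_1 + \cdots + x_t$ with each $x_j \in \Newt(s_{\lambda_{i_j}}) \cap \Z^n$.

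Finally, since each $\lambda_{i_j} \in \MLP(\cP) \subseteq \cP$ and $\cP$ is symmetric, every $S_n$-translate of $\lambda_{i_j}$ lies in $\cP$, so the convex hull $\Newt(s_{\lambda_{i_j}})$ does as well. Thus each $x_j \in \cP \cap \Z^n$, establishing IDP. The main obstacle is pinning down the SNP of $\prod_j s_{\lambda_{i_j}}$: I would either invoke a clean reference from \cite{mty19} or reprove it in this setting. The Minkowski decomposition of permutohedra is standard, and once both ingredients are in hand, the remainder is a direct assembly of the conjecture's hypothesis with Rado's theorem.
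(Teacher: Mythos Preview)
Your argument is correct and reaches the same conclusion, but the mechanism for decomposing the lattice point $x$ differs from the paper's. The paper works combinatorially: having located $x$ in $\Supp(s_{\lambda_I})$, it takes a semistandard tableau $T$ of shape $\lambda_I$ with content $x$ and explicitly peels off columns of $T$, row-length by row-length, to produce semistandard tableaux $T_{i_1},\dots,T_{i_t}$ of shapes $\lambda_{i_1},\dots,\lambda_{i_t}$ whose contents sum to $x$. Your route replaces this hands-on construction with the polytopal statement that $\prod_j s_{\lambda_{i_j}}$ has SNP, together with the Minkowski identity $\Newt(s_{\lambda_I})=\sum_j \Newt(s_{\lambda_{i_j}})$ for permutohedra; once those are in place, the decomposition of $x$ drops out formally. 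Both approaches are valid and in fact equivalent in content: the paper's column-splitting is precisely a constructive proof that every lattice point of the Minkowski sum of permutohedra is a sum of lattice points of the summands, i.e., exactly the SNP of the product you want to cite. What the paper buys is self-containment and an explicit algorithm; what your approach buys is brevity, at the cost of importing the SNP of Schur products (derivable from the M-convexity results in \cite{mty19}, though not always stated as a standalone proposition there, so your hedge about possibly reproving it is well placed).
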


\begin{proof}
    Let $\cP$ be a symmetric polytope for which the prior conjecture holds true. Let $t$ be a positive integer. We want to show that every lattice points of $t\cP$ can be written as sum of $t$ lattice points of $\cP$. Let $\pi\in t\cP$. Since $ts$ has SNP, then $\pi$ is an exponent vector of 
        \[ts=\displaystyle \sum_{I\in \multiset{K}{t}}s_{\lambda_I},\]
that is, there is an $I:=\{i_1,i_2,\dots, i_t\}$ so that $\pi$ is an exponent vector of $s_{\lambda_{i_1}+\lambda_{i_2}+\cdots +\lambda_{i_t}}$. Each exponent vector of $s_{\lambda_{i_1}+\lambda_{i_2}+\cdots +\lambda_{i_t}}$ corresponds to a content vector of some semistandard tableau, which we denote $T$, for the partition $\lambda_{i_1}+\lambda_{i_2}+\cdots +\lambda_{i_t}$. We demonstrate that any such tableau can be decomposed into semistandard tableaux for $\lambda_{i_1}$, $\lambda_{i_2}$, $\dots$, $\lambda_{i_t}$, which we will denote $T_{i_j}$ for $j$ from $1$ to $t$ respectively. We define these using an inductive procedure, focusing on one row of each tableau at a time, starting with the last row for each.

 Let $n$ be the number of rows in $T$ and let $(\lambda_{i_j})_k$ denote the $k$th part of $\lambda_{i_j}$.
For each $j$, starting at $j=1$ do the following.
\begin{enumerate}
    \item Let $\ell$ be the number of rows of $T$. Remove the first $(\lambda_{i_j})_\ell$ columns from $T$ and use them as the right-most columns of $T_{i_j}$. Note that these columns all have length $\ell$.
\item Subtract $(\lambda_{i_j})_\ell$ from each entry of $\lambda_{i_j}$, truncating the last coordinate.
\end{enumerate}
We now can repeat these steps by inducting on the length of the partitions to find the remaining columns for $T_{i_j}$.

Each $T_{i_j}$ is a semistandard tableaux since columns of $T_{i_j}$ come directly from columns of $T$ and they are added from left-to-right.
Each $T_{i_j}$ is also of shape $\lambda_{i_j}$. We prove this by inducting on the rows of $T_{i_j}$. Observe by step $1$ of the above procedure that this is true for row $n$. After the first iteration of the two steps above is completed, note that the length of the partitions decreases. In general, suppose row $r$ of $T_{i_j}$ has $(\lambda_{i_j})_r$ cells. Then the above procedure will add $(\lambda_{i_j})_{r-1}-(\lambda_{i_j})_{r}$ columns from $T$ to $T_{i_j}$, meaning that row $r-1$ will have $(\lambda_{i_j})_{r-1}-(\lambda_{i_j})_{r}+(\lambda_{i_j})_{r}=(\lambda_{i_j})_{r-1}$ cells. Furthermore, after the above steps are completed for row $r-1$, $T$ will have length $r-2$, meaning that no further cells will be added to row $r-1$.

% \begin{enumerate}
%     % \item Make all partitions $\lambda_{i_1}$, $\lambda_{i_2}$, $\dots$, $\lambda_{i_t}$ the same length by adding $0$s to the end of them if needed. 
%     % \item Reindex $\lambda_{i_j}$  so that the partitions with the largest part appear first. 
%     \item The  first $(\lambda_{i_j})_\ell $  columns of  $T_{i_j}$  are the columns $(\lambda_{i_{1}})_\ell+\cdots +(\lambda_{i_{j-1}})_\ell+1$ through $(\lambda_{i_{1}})_\ell+\cdots +(\lambda_{i_{j}})_\ell$ of $T$. (If $j-1=0$, then the first $\lambda_{j}$ columns of $T$ are assigned to $T_{i_1}$.)
%     \item Now remove $(\lambda_{i_{j}})_\ell$ from each coordinate of $\lambda_{i_j}$.
%     \end{enumerate}

% Let $\mu = \lambda_{i_1}+\lambda_{i_2}+\cdots +\lambda_{i_t} $. Then 

% \[(\mu)_k =\sum_{j=1}^t (\lambda_{i_j})_{k} \]

%     \[ \a_m = \sum_{k=0}^m  (\mu)_{\ell-k}= \sum_{k=\ell-m+1}^\ell  (\mu)_{k}=\sum_{k=0}^m (T)_{\ell-k}\]

% $T_{i_j} $: $\sum$

%     This makes it so that the number of cells in last row of $T_{i_j}$ is $\lambda_{i_j}$. 
    
%     We now inductively apply this process so that the other rows of $T_{i_j}$ have length corresponding to the other parts in $\lambda_{i_j}$. In general, assuming that the $m+1$th row of $T_{i_j}$ has length equal to $m+1$th part in $\lambda_{i_j}$, repeat steps $3$ and $4$ using $m$ instead of $\ell$.
% \GDN{Do I need to do step 2 during the inductive step? I wonder if changing the order is actually necessary. }

Consequently, each $T_{i_j}$ has a shape that corresponds to $\lambda_{i_j}$, which means the content vector of $T_{i_j}$ is a lattice point of $\cP$ since $s$ has SNP. Thus $\pi$, which is the content vector of $T$, can be written as a sum of $t$ lattice points of $\cP$. This implies $\cP$ has IDP.

%$\pi \in \Supp(s_{\lambda_{i_1}+\lambda_{i_2}+\cdots +\lambda_{i_t}}) = \Supp(s_{\lambda_{i_1}})\cup\Supp(s_{\lambda_{i_2}})\cup\cdots \cup \Supp(s_{\lambda_{i_t}})$

\end{proof}

\begin{example}

In Figure \ref{fig:tab_decomp_ex} is a semistandard tableau for the partition $(8,7,2)$, which can be decomposed into the three tableaux with shapes $\lambda=(2,2,1)$, $\mu=(4,3,1)$ and $\gamma=(2,2,0)$ using the algorithm in the prior proof.

Since the last part of $\lambda$ and $\mu$ is $1$, we start by designating the first column for $T_\lambda$ and the second for $T_\mu$ (depicted in Figure \ref{fig:tab_decomp_ex_first_step}).

\begin{figure}[h]
\ytableausetup{centertableaux}
     \begin{subfigure}[b]{0.3\textwidth}
         \centering
         \begin{ytableau}
 1 &  1 &  1 &2 &2 & 3 &  3 &  3 \\
2 &  2 &  3 &   3 &   4 &   4 & 4 \\
 3 &    4 
\end{ytableau}
         \caption{}\label{fig:tab_decomp_ex}
     \end{subfigure}
     \hfill
     \begin{subfigure}[b]{0.3\textwidth}
         \centering
         \begin{ytableau}
*(red!50) 1 & \none & *(blue!50)  1 &   \none & 1 &2 & 2 & 3 & 3 &  3 \\
*(red!50)2 & \none & *(blue!50) 2 & \none & 3 & 3 & 4 &4 &  4 \\
*(red!50) 3 & \none &  *(blue!50)  4 
\end{ytableau}
         \caption{}\label{fig:tab_decomp_ex_first_step}
     \end{subfigure}
% %      \hfill
% %      \begin{subfigure}[b]{0.1\textwidth}
% %          \centering
% %          \begin{ytableau}
% % *(red!50) 1   \\
% % *(red!50) 2  \\
% % *(red!50) 3 
% % \end{ytableau}
% %          \caption{}\label{fig:tab_decomp_ex_lambda1}
% %      \end{subfigure}
% %      \hfill
% %      \begin{subfigure}[b]{0.1\textwidth}
% %          \centering
% % \begin{ytableau}
% % *(blue!50) 1  \\
% % *(blue!50) 2 \\
% % *(blue!50) 4
% % \end{ytableau}
%          \caption{}\label{fig:tab_decomp_ex_mu1}
%      \end{subfigure}
        \caption{The first step of the decomposition process for a semistandard tableau of shape $(8,7,2)$.}
        \label{fig:three graphs}
\end{figure}

Modifying our partitions, we now have $\lambda=(1,1,0)$, $\mu=(3,2,0)$, and $\gamma=(2,2,0)$. Continuing with the updated version of $T$, the first column is assigned to $T_\lambda $, the following two are assigned to $T_\mu$, and the following two columns are assigned to $T_\gamma$. At the end of this step, $T_\lambda$ and $T_\gamma$ are complete (see Figures \ref{fig:tab_decomp_ex_lambda2} and \ref{fig:tab_decomp_ex_gamma}, respectively), and all but the final column of $T$ has been assigned. This final column will be assigned to $T_\mu$. See Figure \ref{fig:tab_decomp_ex_secondthirdstep} below for the identification of these columns in $T$ and the updated picture for $T_\gamma$ in Figure \ref{fig:tab_decomp_ex_mu2}.

\begin{figure}[h]
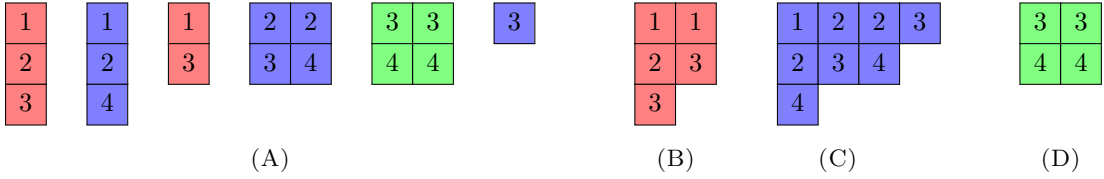

\ytableausetup{centertableaux}
     \begin{subfigure}[b]{0.5\textwidth}
         \centering
 \begin{ytableau}
*(red!50) 1 & \none & *(blue!50)  1 &\none &  *(red!50)1 &\none& *(blue!50) 2 &  *(blue!50) 2 &\none & *(green!50) 3 &*(green!50) 3 &  \none & *(blue!50) 3\\ 
*(red!50) 2 & \none & *(blue!50)  2 &\none &  *(red!50)3 &\none& *(blue!50) 3&  *(blue!50) 4 &\none & *(green!50) 4 &*(green!50) 4 &  \none & \none\\ 
*(red!50) 3 & \none & *(blue!50)  4 \\
\end{ytableau}
                 \caption{}\label{fig:tab_decomp_ex_secondthirdstep}
     \end{subfigure}
     \hfill    
     \begin{subfigure}[b]{0.1\textwidth}
         \centering
      \begin{ytableau}
*(red!50) 1 &*(red!50)  1  \\
*(red!50) 2 &*(red!50)  3  \\
*(red!50) 3 
\end{ytableau}
            \caption{}\label{fig:tab_decomp_ex_lambda2}
     \end{subfigure}
     \hfill
     \begin{subfigure}[b]{0.1\textwidth}
         \centering
         
         \begin{ytableau}
*(blue!50) 1 &*(blue!50)  2 &*(blue!50) 2&*(blue!50)  3 \\
*(blue!50) 2 &*(blue!50) 3&*(blue!50)  4  \\
*(blue!50) 4
\end{ytableau}
         \caption{}\label{fig:tab_decomp_ex_mu2}
     \end{subfigure}
     \hfill
     \begin{subfigure}[b]{0.2\textwidth}
         \centering
         \begin{ytableau}
*(green!50)   3 &*(green!50)   3 \\
*(green!50)  4 &*(green!50)   4 \\
\none
\end{ytableau}

         \caption{}\label{fig:tab_decomp_ex_gamma}
     \end{subfigure}
        \caption{Steps two and three of the decomposition process of a tableau of shape $(8,7,2)$.}
        \label{fig:three graphs}
\end{figure}

\end{example}

As an example of an application for the prior Theorem, if $\cP$ is a symmetric polytope with two maximal lattice partitions so that Conjecture \ref{conjecture:snp} holds, then each lattice points of $t\cP$ is a content vector of a semistandard Young tableau of $(t-k)\lambda + k\mu$ for $0\leq k\leq t$, which can be separated into $t-k$ content vector for $\lambda$ and $k$ for $\mu$. Note that the content vectors of $\lambda$ and $\mu$ are themselves lattice points of $\cP$. Thus the lattice point can be written as a sum of $t$ points from $\cP$.

% {\color{green}
% Su Ji conjecture: If size of two partitions differ by more than one then there are other maximal points}

\begin{remark}
One can apply the above framework to aforementioned results. For example, consider the work done by \cite{hong20}. There, they define  the symmetric Grothendieck polynomial $G_\lambda$ of a partition $\lambda$ and study $\cP=\Newt(G_\lambda)$. The polynomial $G_\lambda$ is of the form
\[G_\lambda=\sum_\mu c_\mu s_\mu\]
where $c_\mu\in \Z$ and the sum is over partitions $\mu$ obtained by adding boxes to the Young diagram of $\lambda$. The partitions $\mu$ in the above descriptions are not all maximal partitions. However, in \cite{ey17}, they have shown that the $\cP$ is the convex hull of the support of $\displaystyle\sum_{\mu \in \MLP} s_\mu$. 
In \cite{hong20}, the authors showed that $\cP$ has IDP by observing that $t\cP = \Newt(G_{t,t\lambda})$ where $t\in \Z_{>0}$ and $G_{t,t\lambda}$ is the inflated symmetric Grothendieck polynomial. In addition, they showed that the $t\cP$ is the convex hull of the support of $\displaystyle \sum_{\mu \in \MLP(t\cP)} s_\mu$. Thus, one can define $s=\displaystyle \sum_\mu s_\mu$ (with maximal lattice partition $\mu$) and apply the method outlined in this section to give an alternative proof that $\cP$ is IDP.

\end{remark} 

% \section{The Method}
% The following is an outline of how one can demonstrate a symmetric polytope has IDP. 
% \begin{enumerate}
%     \item Start with symmetric polytope $P$
%     \item Write down sum S of Schur polynomials corresponding to maximal partitions
%     \item Construct $tS$ by combining $t$ multisets of partitions in $S$
% \item $tS$ has SNP
% \item $tP$ is Newton(tS) for all t
% \item $P$ is IDP 
% \end{enumerate}

% We will show this method holds so long as step 4 is known to be true. 

\section{2-Partition Maximal Polytopes and IDP}\label{sec:thm}
% Goal: Let $\mathcal{P}$ be a ``symmetric'' polytope where there are only two maximal partition $\mu$ and $\lambda$. Let $S = S_\lambda+S_\mu$. show that $tS$ has SNP in the case of $k=2$. 

In this section we apply the method outlined in the prior section in our special case. Throughout, $\lambda$ and $\mu$ are pairwise maximal partitions corresponding to a 2-partition maximal polytope $\cP$.
We will focus on the case where the size of $\lambda$ and $\mu$ are the same, leaving the other cases for future study. 
Thus, $\cP$ lies in a hyperplane. We further restrict our study to the case where $\cP\subseteq \R^3$, meaning that the length of $\lambda$ and $\mu$ is 3, as in this case we learn even more about the partitions $\lambda$ and $\mu$. 
% \begin{proposition}%Proposition?
%     $\lambda$ and $\mu$ are partitions with sizes $n_\lambda$ and $n_\mu$. Then $|n_\lambda-n_\mu| \leq 1$.  
% \end{proposition}

% \begin{remark}
%     One should note that the converse of the prior proposition is not true. For example, see Figure \ref{fig:non-2-partition} and Example \ref{ex:one}.

% \end{remark}

\begin{lemma} \label{lem:same_size_differ_1}

    If the lengths of $\lambda$ and $\mu$ are 3 and both partitions have the same size, then 
    $|\lambda_i-\mu_i|= 1$ for some $i\in \{1,2,3\}$. 

\end{lemma}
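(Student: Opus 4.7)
The plan is proof by contradiction. Set $d_i := \lambda_i - \mu_i$; since $|\lambda|=|\mu|$ we have $d_1+d_2+d_3=0$, and since $\lambda,\mu$ are distinct members of $\MLP(\cP)$ they must be incomparable in dominance order. A short case analysis shows that (after possibly swapping the two partitions) incomparability forces $d_1>0$ and $d_1+d_2<0$, so $d_3>0$. Assuming toward a contradiction that $|d_i|\ne 1$ for every $i$, we then have $d_1,d_3 \ge 2$ and $d_2 = -(d_1+d_3) \le -4$. The partition conditions $\mu_1 \ge \mu_2$ and $\lambda_2 \ge \lambda_3$ translate into the convenient bounds
\[
\lambda_1 - \lambda_2 \;\ge\; d_1 - d_2 \;=\; 2d_1 + d_3 \;\ge\; 6, \qquad \mu_2 - \mu_3 \;\ge\; d_3 - d_2 \;=\; d_1 + 2d_3 \;\ge\; 6.
\]

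The goal is to produce a lattice partition $\nu \in \cP$ of the same size as $\lambda, \mu$ that is dominated by neither of them; since $\LP(\cP)$ is finite, some $\trianglelefteq$-maximal element of $\LP(\cP)$ above $\nu$ then lies in $\MLP(\cP) \setminus \{\lambda,\mu\}$, contradicting $\MLP(\cP) = \{\lambda,\mu\}$. The candidates I propose are
\[
\nu^{(A)} := (\mu_1+1,\ \mu_2-2,\ \mu_3+1) \qquad \text{and} \qquad \nu^{(B)} := (\lambda_1-1,\ \lambda_2+2,\ \lambda_3-1);
\]
the plan is to use $\nu^{(A)}$ whenever $d_3 \le d_1$ and $\nu^{(B)}$ whenever $d_1 \le d_3$, so that at least one always applies. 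The bounds above show that each candidate is a partition of size $|\lambda|$ with nonnegative entries, and the strict inequalities $\nu_1 > \mu_1$ and $\nu_1 + \nu_2 > \lambda_1 + \lambda_2$ (the latter using $d_3 \ge 2$) show that neither is dominated by $\lambda$ or $\mu$.

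The main obstacle is verifying $\nu \in \cP$. My approach rests on the following geometric fact: for any $t \in [0,1]$ and $\xi := t\lambda + (1-t)\mu$ (which is weakly decreasing since $\lambda, \mu$ are),
\[
\conv(S_3 \cdot \xi) \;\subseteq\; t\,\Newt(s_\lambda) + (1-t)\,\Newt(s_\mu) \;\subseteq\; \cP,
\]
because each $\pi\xi = t\,\pi\lambda + (1-t)\,\pi\mu$ lies in the Minkowski sum and $\cP$ is convex. Via Rado's theorem (applied to weakly decreasing real vectors) it therefore suffices to exhibit $t \in [0,1]$ with $\nu \trianglelefteq \xi$. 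Writing the dominance inequalities out componentwise, one finds that for $\nu^{(A)}$ the feasible range is $t \in [1/d_1,\ 1/d_3]$, which is nonempty iff $d_3 \le d_1$, while for $\nu^{(B)}$ it is $t \in [1-1/d_1,\ 1-1/d_3]$, nonempty iff $d_1 \le d_3$. These conditions match the case split exactly, so in every situation $\nu \in \cP$, producing the desired contradiction.
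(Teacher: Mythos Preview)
Your proof is correct. Both arguments follow the same high-level strategy---assume no $|d_i|$ equals $1$, produce a third lattice partition in $\cP$ incomparable to both $\lambda$ and $\mu$, and contradict 2-partition maximality---but the implementations differ. The paper builds the single candidate $\gamma=(\mu_1+1,\,n-\mu_1-\lambda_3,\,\lambda_3-1)$ and verifies $\gamma\in\conv(\lambda,\mu,\iota)$ by inverting a $3\times 3$ matrix (with Sage), then treats the possibility $d_i=0$ in a separate closing paragraph. Your route is computation-free: the incomparability analysis already forces $d_1,d_3>0$ and $d_2<0$, so the $d_i=0$ case never arises; you then pick $\nu^{(A)}$ or $\nu^{(B)}$ according to whether $d_3\le d_1$ or $d_1\le d_3$ and place it inside the permutohedron of $\xi=t\lambda+(1-t)\mu$ for a suitable $t$, reducing membership in $\cP$ to two one-line majorization inequalities via Rado. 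The trade-off is that the paper needs only one candidate (at the cost of explicit linear algebra), while your argument needs a case split but stays entirely elementary and makes the role of the hypothesis $|d_i|\ge 2$ transparent through the feasibility windows $[1/d_1,1/d_3]$ and $[1-1/d_1,1-1/d_3]$.
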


\begin{proof}
    Suppose that we have $\lambda=(\lambda_1,\lambda_2,\lambda_3)$ and $\mu=(\mu_1,\mu_2,\mu_3)$ so that $|\lambda_i-\mu_i|\geq 2$ for all $i$. Without loss of generality, suppose $\lambda_1>\mu_1$. Thus, for $\lambda$ and $\mu$ to be maximal (and not comparable), we must have $\lambda_3>\mu_3$. Specifically, note that $\lambda_3>\mu_3+1$. 
    
    Now, let $\gamma:=(\mu_1+1,n-\mu_1-\lambda_3,\lambda_3-1)$. Note that $\gamma$, $\lambda$, and $\mu$ are pairwise maximal. We prove that $\gamma\in \cP$.

    %\suji{Since $\lambda$ and $\mu$ are not comparable, $\lambda_1\neq \mu_1$ and $\lambda_3\neq \mu_3$. Otherwise we are comparing just two entries in $\lambda$ and $\mu$. The partitions of length 2 are always comparable. Thus $\lambda_1\neq \mu_1$. Assume $\lambda_1>\mu_1$.\\ Since $\lambda_1>\mu_1$ and $\lambda_1+\lambda_2+\lambda_3 = \mu_1+\mu_2+\mu_3$, in order for $\lambda$ and $\mu$ to be not comparable, $\lambda_1+\lambda_2 < \mu_1+\mu_2$. Thus $n-\lambda_3< n-\mu_3$, i.e., $\lambda_3>\mu_3$.} 

First, consider the point $\iota:=(\mu_1,n-\mu_1-\lambda_3,\lambda_3)$. Observe that $\iota\in \cP$ since $\mu$ and $\lambda$ dominate $\iota$. Thus, it is sufficient to show that $\gamma\in \conv(\lambda, \mu,\iota)$. To this end, we define a matrix $A=\begin{bmatrix}
    \lambda & \mu & \iota
\end{bmatrix}$, that is, a $3\times 3$ matrix whose columns are $\lambda$, $\mu$, and $\iota$. We will demonstrate that $A^{-1}$ exists and $A^{-1}\gamma$ has non-negative entries which sum to $1$. 

To proceed, we use  Sage's Matrix class to perform the necessary computations. These can be seen in Appendix \ref{app:sage}.
    First, the determinant of $A$ is 
    \[\big(\lambda_3(\mu_1 +\mu_2) -\mu_3(\lambda_1+\lambda_2)\big)(\lambda_1-\mu_1)=n(\lambda_3 -\mu_3)(\lambda_1-\mu_1)\]
    which is nonzero since $\lambda_1>\mu_1$ and $\lambda_3>\mu_3$, and so $A^{-1}$ exists.

    We next demonstrate that the entries of $A^{-1}\gamma$ sum to $1$. After computation, we find that
    \[ A^{-1}\gamma= \begin{bmatrix}
        {  1\over (\lambda_1-\mu_1)} & {1\over (\lambda_3-\mu_3)} & 1+{\lambda_2-\mu_2 \over (\lambda_1-\mu_1)(\lambda_3-\mu_3)}
    \end{bmatrix}.\]

    If one adds the first two entries together, utilizing the fact that the size of $\lambda$ and $\mu$ are the same, we have 

    \[{  1\over (\lambda_1-\mu_1)}+ {1\over (\lambda_3-\mu_3)} ={\lambda_1+\lambda_3-(\mu_1+\mu_3)\over (\lambda_1-\mu_1)(\lambda_3-\mu_3)}={\mu_2-\lambda_2\over (\lambda_1-\mu_1)(\lambda_3-\mu_3)},\]
    which when added the final entry of $A^{-1}\gamma$ gives $1$. Now, since $\lambda_1>\mu_1$ and $\lambda_3>\mu_3$, one can see the first two entries of $\gamma$ are positive. Since the last entry is $1$ minus the first two entries of $\gamma$, both of which are at most ${1\over 2}$ due to our assumption that $|\lambda_i-\mu_i|\geq 2$, this means the final entry is non-negative. That is, we have demonstrated that $\gamma\in \conv(\lambda,\mu, \iota)\subseteq \cP$. This means that $\{\lambda,\mu,\gamma\}\subseteq \MLP(\cP)$, but we assumed that $\cP$ is $2$-partition-maximal. This is a contradiction.

    Thus, we know there exists an $i$ for which $|\lambda_i-\mu_i|\leq 1$. However, we now further that this value can not be $0$. However, since the length of $\mu$ and $\lambda$ are 3, if one of their coordinates agree, then after removing these coordinates we are really considering  partitions of length 2 with the same size. These partitions are always comparable, contradicting that our partitions are pair-wise not comparable.

\end{proof}

In the prior proof, we utilized the region $\conv(\lambda,\mu,\iota)$ with $\iota:=(\mu_1,n-\mu_1-\lambda_3,\lambda_3)$. This region has further utility. 

\begin{lemma}
    The interior of $\conv(\lambda,\mu,\iota)$ contains all weakly decreasing lattice points in $\cP$ which are not contained in $\Newt(s_\lambda)$ and $\Newt(s_\mu)$. \label{lem:dec}
\end{lemma}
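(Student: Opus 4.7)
The plan is to verify that any such $x$ has strictly positive barycentric coordinates with respect to the triangle $\conv(\lambda,\mu,\iota)$. Since the previous lemma's proof shows $\det[\lambda\mid\mu\mid\iota] = n(\lambda_1-\mu_1)(\lambda_3-\mu_3)\neq 0$, the representation $x = a\lambda + b\mu + c\iota$ with $a+b+c=1$ is unique. Running the same Cramer's-rule computation on an arbitrary $x$ in the hyperplane $x_1+x_2+x_3=n$ should give
\[
a \;=\; \frac{x_1-\mu_1}{\lambda_1-\mu_1}, \qquad b \;=\; \frac{\lambda_3-x_3}{\lambda_3-\mu_3},
\]
with $c=1-a-b$, and the relative interior of the triangle is the locus $a,b,c>0$.

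For $a,b>0$: by Lemma~\ref{lem:newton_sum} and the $2$-partition-maximal hypothesis, $\cP=\Newt(s_\lambda+s_\mu)=\conv(\Newt(s_\lambda)\cup\Newt(s_\mu))$, so $\cP$ is the convex hull of the permutations of $\lambda$ and $\mu$. Consequently every $y\in\cP$ satisfies $y_1\le\max(\lambda_1,\mu_1)=\lambda_1$ and $y_3\ge\min(\lambda_3,\mu_3)=\mu_3$. By Rado's Theorem, the hypothesis $x\notin\Newt(s_\lambda)$ is equivalent to $x\not\trianglelefteq\lambda$; combined with $x_1\le\lambda_1$ this forces $x_1+x_2>\lambda_1+\lambda_2$, i.e.\ $x_3<\lambda_3$, giving $b>0$. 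Symmetrically, $x\notin\Newt(s_\mu)$ together with $x_3\ge\mu_3$ (equivalent to $x_1+x_2\le\mu_1+\mu_2$) forces $x_1>\mu_1$, giving $a>0$.

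The substantive step is $c>0$. I would apply the linear functional $\phi(y)=(\lambda_3-\mu_3)\,y_1-(\lambda_1-\mu_1)\,y_3$, for which direct computation gives $\phi(\lambda)=\phi(\mu)=\mu_1\lambda_3-\lambda_1\mu_3$. Because the coefficient on $y_1$ is positive and the coefficient on $y_3$ is negative, and both $\lambda$ and $\mu$ are weakly decreasing, $\phi$ is maximized on the permutation orbit of each partition at the sorted partition itself. Hence $\phi$ attains its maximum on $\cP$ at the common value $\phi(\lambda)$, so every $x\in\cP$ satisfies $(\lambda_3-\mu_3)(x_1-\mu_1)\le(\lambda_1-\mu_1)(x_3-\mu_3)$, which rearranges to $c\ge 0$.

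Finally, equality $c=0$ holds precisely when $x$ lies on the $\phi$-maximum face of $\cP$, which---since $\lambda$ and $\mu$ are the unique orbit-maximizers---is the segment $[\lambda,\mu]$. Any lattice partition strictly between $\lambda$ and $\mu$ would satisfy $x_1+x_2>\lambda_1+\lambda_2$ and $x_1>\mu_1$ (inherited from the convex combination), so it would be dominated by neither $\lambda$ nor $\mu$; this would force a new element of $\MLP(\cP)$, contradicting $\MLP(\cP)=\{\lambda,\mu\}$. Thus no such lattice partition exists on the open segment, so $c>0$. I expect this last step---pinning down the $\phi$-maximum face and invoking $2$-partition maximality to exclude lattice partitions on $(\lambda,\mu)$---to be the main obstacle, with the positivity of $a$ and $b$ following cleanly from symmetry and the non-dominance hypotheses.
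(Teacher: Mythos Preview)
Your argument tracks the paper's closely: both identify the three facet inequalities of $\conv(\lambda,\mu,\iota)$ (equivalently, the barycentric coordinates $a,b,c$), derive $x_1>\mu_1$ and $x_3<\lambda_3$ from the hypotheses $x\notin N_\mu$ and $x\notin N_\lambda$ (you via Rado's Theorem, the paper via the explicit facet list in \cite{hong20}), and bound the same linear functional $\phi(y)=(\lambda_3-\mu_3)y_1-(\lambda_1-\mu_1)y_3$ over the permutation orbits to handle the third side.

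The one substantive difference is the strictness of $c$. The paper's proof in fact stops at $c\ge 0$ (it only verifies the non-strict inequality $\phi(x)\le\phi(\lambda)$), despite the word ``interior'' in the statement. Your final paragraph upgrades this to $c>0$ by invoking $2$-partition maximality to exclude lattice partitions on the open segment $(\lambda,\mu)$. That argument is valid, but notice it also shows the lemma is vacuous for $\cP$ itself: any weakly decreasing lattice point of $\cP$ outside $N_\lambda\cup N_\mu$ is already a partition dominated by neither $\lambda$ nor $\mu$, contradicting $\MLP(\cP)=\{\lambda,\mu\}$ outright, not merely in the $c=0$ case. The lemma's real content is its (tacit) application in Theorem~\ref{thm:snp} with $t\lambda,t\mu,t\iota$ in place of $\lambda,\mu,\iota$; there $2$-partition maximality of $t\cP$ is not available, your $c>0$ step does not transfer, and only $c\ge 0$ survives---which is all that proof actually needs.
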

\begin{proof}
For convenience we define  $N_\lambda:=\Newt(s_\lambda)$ and $N_\mu:=\Newt(s_\mu)$.

First, we demonstrate $\Ext(\cP)=\Ext(N_\lambda)\cup \Ext(N_\mu)$.
By Lemma \ref{lem:newton_sum}, $\cP$ is the convex hull of the exponent vectors for $s_\lambda$ and $s_\mu$. Without loss of generality, suppose $\pi\in \Ext(\cP)$ is an exponent vector of $s_\lambda$. If $\pi$ were not an extreme point of $N_\lambda$, then it is either on the boundary or interior of $N_\lambda$. Then the same would be true for $\cP$, which is a contradiction. 
On the other hand, suppose $\pi\in \Ext(N_\lambda)\cup \Ext(N_\mu)$ and yet $\pi \notin \Ext(N_s)$. Without loss of generality, $\pi \in \Ext(N_\mu)$, which implies that $\pi$ is one of the permutations of $\mu$. By definition of $N_s$, if $\pi \not \in \Ext(N_s)$, then none of the other permutations of $\pi$ is an extreme point of $N_s$.Then $\Ext(N_s) \subseteq \Ext(N_\lambda)$. This implies that $N_s \subseteq N_\lambda$. Thus $\mu \in N_s $ is a lattice point in $N_\lambda$, which means that $\mu$ is dominated by $\lambda$. However, this contradicts that $\mu$ and $\lambda$ are not comparable.  

%By construction of $N_s$, this suggests that no permutation of $\pi$ is in $N_s$ either. By choice of $\pi$, $\pi$ is, without loss of generality, one of the permutations of $\mu$. So this implies that $\Ext(N_s)=\Ext(N_\lambda)$ which implies $\mu$ is dominated by $\lambda$, but this contradicts our assumption on the relation of the permuations. 

Next, we demonstrate that $\iota=(\mu_1,n-\mu_1-\lambda_3,\lambda_3)$ is both on a facet of $N_\lambda$ and $N_\mu$. Using \cite[Theorem 29]{hong20}, note that $\iota$ is on the face defined by $x+y=\lambda_1+\lambda_2$, and so $\iota$ is on a facet of $N_\lambda$, and $\iota$ is on the face defined by $x=\mu_1$, so $\iota$ is also on a facet of $N_\mu$.

The line $(\lambda_3-\mu_3)x-(\lambda_1-\mu_1)z=\mu_1\lambda_3-\mu_3\lambda_1$ has both $\lambda$ and $\mu$ on it. We claim this line is a facet of $\cP$, that is, $(x,y,z)\in \cP$ implies $(\lambda_3-\mu_3)x-(\lambda_1-\mu_1)z\leq \mu_1\lambda_3-\mu_3\lambda_1$. Note that for all permutations $(x,y,z)$ of $\lambda$, $x\leq \lambda_1$ and $z\geq \lambda_3$. Thus
\begin{align*}
    (\lambda_3-\mu_3)x - ( \lambda_1 -\mu_1)z \leq& (\lambda_3-\mu_3)\lambda_1 - ( \lambda_1 -\mu_1)\lambda_3 \\=& \lambda_3 \mu_1 - \lambda_1\mu_3.
\end{align*}
Note that for all permutations $(x,y,z)$ of $\mu$, $x\leq \mu_1$ and $z\geq \mu_3$. Thus
\begin{align*}
    (\lambda_3-\mu_3)x - ( \lambda_1 -\mu_1)z \leq& (\lambda_3-\mu_3)\mu_1 - ( \lambda_1 -\mu_1)\mu_3 \\=& \lambda_3 \mu_1 - \lambda_1\mu_3.
\end{align*}
We have demonstrated that the extreme points of $\cP$ satisfy the aforementioned inequality, and thus every point in $\cP$ does as well. 

We now can finally demonstrate that the decreasing points of $\cP$ are contained in $\conv(\lambda,\mu,\iota)$. We first identify the facet defining inequalities of $\conv(\lambda,\mu,\iota)$. There are three facets: hyperplanes containing $\mu$ and $\lambda$, $\iota$ and $\mu$, and $\iota$ and $\lambda$. That is, the points of $\conv(\lambda,\mu,\iota)$ satisfy the following three inequalities:
\[(\lambda_3-\mu_3)x-(\lambda_1-\mu_1)z\leq \mu_1\lambda_3-\mu_3\lambda_1,\,\,\, x\geq \mu_1,\,\,\, \text{and } z \leq \lambda_3 .\]

Let $(x,y,z)\in \cP\setminus (N_\lambda\cup N_\mu)$ and $x\geq y\geq z$. Since $(x,y,z)\not\in N_\lambda,$ it must not satisfy at least one of the facet defining inequalities from \cite[Theorem 29]{hong20}: 
\begin{align*}
    x\leq \lambda_1, &\hspace{.3cm} x\geq \lambda_3,\\
    y\leq \lambda_1, &\hspace{.3cm} y\geq \lambda_3,\\
    z\leq \lambda_1, &\hspace{.3cm} z\geq \lambda_3.
\end{align*}
Since $x\geq y\geq z$, one of $x\leq \lambda_1$ or $z\geq \lambda_3$ must not be satisfied. However, $(x,y,z)\in \cP$, which is the convexhull of permutations of $\lambda$ and $\mu$. Since the coordinates of $\lambda$ and $\mu$ are at most $\lambda_1$, $x$, $y$, and $z$ cannot be more than $\lambda_1$. Thus $x\leq \lambda_1$ is always satisfied. Further, $z< \lambda_3$. 

Similarly, since $(x,y,z)\not\in N_\mu,$ $x\leq \mu_1$ or $z\geq \mu_3$ must not be satisfied. But as before, since the coordinates of $\lambda$ and $\mu$ are at least $\mu_3,$ we know that $z\geq \mu_3$. Thus $x>\mu_1$. Thus, we have demonstrated that any decreasing point in $\cP$ satisfies the three aforementioned facet defining inequalities. 
\end{proof}
The utility of this result is two-fold. First, recall that ultimately our goal is to show that if $s=s_\lambda+s_\mu$, we have $ts$ is SNP for all $t\in \N$. Since it is already known that $s_\lambda$ and $s_\mu$ are SNP, we need only be concerned with points outside the Newton polytope for these two points. 
Beyond this, we also know that a point $\pi\in \cP$ if and only if a permutation of $\pi$ is in $\cP$. Thus, we may only concern ourselves with decreasing points outside of the Newton polytopes of $s_\lambda$ and $s_\mu$, that is, the interior of the region $\conv(\lambda,\mu,\iota)$. With the prior Lemma in mind, we have the following from which Theorem \ref{thm:main} follows.

\begin{theorem}
    The polynomial 
    \[ts=\sum_{i=0}^t s_{(t-i)\lambda+i\mu}\]
has SNP. \label{thm:snp}
\end{theorem}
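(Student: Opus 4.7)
The plan is to show every lattice point of $t\cP$ appears as an exponent vector of $ts=\sum_{i=0}^t s_{(t-i)\lambda+i\mu}$. Since both $t\cP$ and the support of $ts$ are $S_3$-invariant, it suffices to handle partitions (weakly decreasing lattice points). The core reduction is this: if a partition $\pi\in t\cP$ satisfies $\pi\trianglelefteq(t-i)\lambda+i\mu$ for some $i\in\{0,1,\dots,t\}$, then by Rado's Theorem $\pi\in\Newt(s_{(t-i)\lambda+i\mu})$, and since Schur polynomials have SNP, $\pi$ is an exponent vector of $s_{(t-i)\lambda+i\mu}$ and hence of $ts$. So the problem collapses to finding such an $i$ for every partition $\pi\in t\cP$.

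For $\pi\in\Newt(s_{t\lambda})$ take $i=0$, and for $\pi\in\Newt(s_{t\mu})$ take $i=t$. Otherwise, the same analysis used in the proof of Lemma~\ref{lem:dec}, applied to $t\cP$ (a dilation of $\cP$, under which all facet inequalities rescale), gives the strict inequalities $\pi_1>t\mu_1$ and $\pi_3<t\lambda_3$ together with the scaled facet inequality
\[(\lambda_3-\mu_3)\pi_1-(\lambda_1-\mu_1)\pi_3\leq t(\mu_1\lambda_3-\mu_3\lambda_1),\]
where WLOG I assume $\lambda_1>\mu_1$ (hence $\lambda_3>\mu_3$ and $\mu_2>\lambda_2$).

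Since $\pi$ and $(t-i)\lambda+i\mu$ both have size $tn$, dominance reduces to the pair $\pi_1\leq(t-i)\lambda_1+i\mu_1$ and $\pi_3\geq(t-i)\lambda_3+i\mu_3$. Solving each for $i$, the task becomes finding an integer $i\in\{0,\dots,t\}$ in the closed interval
\[\left[\frac{t\lambda_3-\pi_3}{\lambda_3-\mu_3},\;\frac{t\lambda_1-\pi_1}{\lambda_1-\mu_1}\right],\]
whose non-emptiness is a short algebraic rearrangement of the facet inequality above, and so is automatic. The main obstacle is ensuring this (possibly short) interval contains an integer, and here Lemma~\ref{lem:same_size_differ_1} is essential: since $\lambda_1-\mu_1\geq1$ and $\lambda_3-\mu_3\geq1$, we have $\mu_2-\lambda_2=(\lambda_1-\mu_1)+(\lambda_3-\mu_3)\geq2$, ruling out $i=2$ in that lemma and forcing $\lambda_1-\mu_1=1$ or $\lambda_3-\mu_3=1$. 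In the first case $t\lambda_1-\pi_1$ is an integer; combined with $\pi_1\leq t\lambda_1$ and $\pi_1>t\mu_1=t\lambda_1-t$, this places $i:=t\lambda_1-\pi_1$ in $\{0,\dots,t-1\}\subseteq\{0,\dots,t\}$. The second case is symmetric with $i:=t\lambda_3-\pi_3$, completing the proof.
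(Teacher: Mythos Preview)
Your proof is correct and takes essentially the same approach as the paper: both reduce to weakly decreasing lattice points, invoke Lemma~\ref{lem:dec} (rescaled to $t\cP$) together with Lemma~\ref{lem:same_size_differ_1}, and then choose $i=t\lambda_3-\pi_3$ (or symmetrically $i=t\lambda_1-\pi_1$) according to which coordinate of $\lambda$ and $\mu$ differs by exactly~$1$. The paper verifies dominance by writing $(x,y,z)=at\lambda+bt\mu+ct\iota$ and computing directly, whereas your interval formulation $\bigl[\frac{t\lambda_3-\pi_3}{\lambda_3-\mu_3},\frac{t\lambda_1-\pi_1}{\lambda_1-\mu_1}\bigr]$ is a cleaner repackaging of that same computation; you also make explicit that $|\lambda_2-\mu_2|\geq 2$ forces the difference-$1$ coordinate to be the first or third, a point the paper absorbs into its ``without loss of generality.''
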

\begin{proof}
We show $(x,y,z)\in \Newt(s_{(t-i)\lambda+i\mu})$ for some $i$. By Lemma \ref{lem:same_size_differ_1}, we know there exists a coordinate between $\lambda$ and $\mu$ which differs exactly by $1$.  Without loss of generality, suppose $\lambda_3-\mu_3=1$. Then $t\mu_3\leq z\leq t\lambda_3$. Let $\alpha:=(t-(t\lambda_3-z))\lambda+(t\lambda_3-z)\mu$.
We claim that $(x,y,z)\in \Newt(s_{\alpha})$.

Observe the last coordinate of $\a$ is $z$, since 
\begin{align*}
    (t-(t\lambda_3-z))\lambda_3+(t\lambda_3-z)\mu_3 = &t\lambda_3 - t\lambda_3\lambda_3 +z \lambda_3  + t\lambda_3\mu_3 -z\mu_3\\
    =& t\lambda_3 +z(\lambda_3- \mu_3)-t \lambda_3 (\lambda_3  -\mu_3)
    \\=& z,
\end{align*}
where the last equality follows after implementing our assumption that $\lambda_3-\mu_3=1$.

By assuming $(x,y,z)$ is weakly decreasing, recall by Lemma \ref{lem:dec} we know that $(x,y,z)\in \conv(\lambda,\mu,\iota)$ where $\iota:=(\mu_1,n-\mu_1-\lambda_3,\lambda_3)$. Thus, we may write $(x,y,z) = at\lambda + bt\mu + c t\iota$ with $a,b,c>0$ and $a+b+c=1$. Since the last coordinates of $(x,y,z)$ and $\alpha$ are equal, we just need to show that $x$ is less than the first coordinate of $\a$, that is,
\[at\lambda_1 + bt\mu_1 + ct \mu_1 < (t-(t\lambda_3-z))\lambda_1+(t\lambda_3-z)\mu_1 = t\lambda_1 - (t\lambda_3-z)(\lambda_1 - \mu_1).\]

Since $z= at\lambda_3 + bt\mu_3 + ct\lambda_3$, we have 
\[t\lambda_3 -z = t\lambda_3-(at\lambda_3 + bt\mu_3 + ct\lambda_3) = t((1-a-c)\lambda_3-b\mu_3)= t(b\lambda_3-b\mu_3)=tb(\lambda_3-\mu_3) = tb .\]

Thus,
\begin{align*}
t\lambda_1 - (t\lambda_3-z)(\lambda_1 - \mu_1) =& t\lambda_1 - tb(\lambda_1-\mu_1)\\
=& t(\lambda_1 - b\lambda_1+b\mu_1)\\
=& t((1-b)\lambda_1+b\mu_1).
\end{align*}

Note that since $\mu_1 <\lambda_1$, we have

\begin{align*}
    x=at\lambda_1 + bt\mu_1 + ct \mu_1 <& at\lambda_1 + bt\mu_1 + ct \lambda_1\\
    = & t(b\mu_1 +(a+c) \lambda_1) \\
    =&t(b\mu_1 +(1-b) \lambda_1) = t\lambda_1 -(t\lambda_3 -z)(\lambda_1-\mu_1).
\end{align*}

That is, we have now shown that $(x,y,z)$ is dominated by $\a$.
\end{proof}

\begin{corollary}[Theorem \ref{thm:main}]

    $\cP$ has IDP. 
\end{corollary}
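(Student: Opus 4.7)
The plan is to simply chain together the two main results already established in Section \ref{sec:thm} with the framework of Section \ref{sec:method}. Since $\cP$ is a 2-partition maximal symmetric polytope in a hyperplane of $\R^3$, we have $\MLP(\cP)=\{\lambda,\mu\}$ with $\lambda$ and $\mu$ of length $3$ and equal size. By Lemma \ref{lem:newton_sum}, $\cP=\Newt(s)$ where $s:=s_\lambda+s_\mu$, so we are in a position to try to apply Theorem \ref{thm:IDP}.

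First I would verify that Conjecture \ref{conjecture:snp} holds in our restricted setting, which here reduces to two claims: (i) $\Newt(ts)=t\cP$ for $ts=\sum_{i=0}^t s_{(t-i)\lambda+i\mu}$, and (ii) $ts$ has SNP. For (i), note that each partition $(t-i)\lambda+i\mu$ lies in $t\cP$ since $t\cP$ is a symmetric lattice polytope containing $t\lambda$ and $t\mu$ as extreme points; conversely, by the dilation of the Rado-type reasoning used in Lemma \ref{lem:newton_sum}, every lattice point of $t\cP$ that is a partition is dominated by some $(t-i)\lambda+i\mu$, which handles the reverse containment on lattice points, and hence on the convex hull. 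Claim (ii) is precisely the content of Theorem \ref{thm:snp}: for any weakly decreasing lattice point $(x,y,z)\in t\cP$ one exhibits an explicit index $i=t\lambda_3-z$ (assuming WLOG $\lambda_3-\mu_3=1$ via Lemma \ref{lem:same_size_differ_1}) for which $(x,y,z)\in \Newt(s_{(t-i)\lambda+i\mu})$, and symmetry propagates this to all lattice points.

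With both parts of the conjecture established for our $s$, Theorem \ref{thm:IDP} applies directly: any lattice point $\pi\in t\cP$ appears as a content vector of some semistandard Young tableau $T$ of shape $(t-i)\lambda+i\mu$, and the column-slicing procedure in the proof of Theorem \ref{thm:IDP} decomposes $T$ into $t-i$ tableaux of shape $\lambda$ and $i$ tableaux of shape $\mu$. Each of these contributes a content vector lying in $\cP$, exhibiting $\pi$ as a sum of $t$ lattice points of $\cP$. This yields IDP.

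The main obstacle, essentially already shouldered by the preceding lemmas, is the SNP statement of Theorem \ref{thm:snp}; once that is in hand the corollary is a formality. The only piece that might require a line of care is confirming (i) above — that the Newton polytope of the summed Schur expression really equals the dilation $t\cP$ rather than a strictly larger polytope — but this follows from containment of each $(t-i)\lambda+i\mu$ in $t\cP$ and from Theorem \ref{thm:snp} supplying every lattice point of $t\cP$. Thus the corollary requires no new ideas beyond assembling Lemmas \ref{lem:same_size_differ_1} and \ref{lem:dec}, Theorem \ref{thm:snp}, and Theorem \ref{thm:IDP}.
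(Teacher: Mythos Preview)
Your proposal is correct and follows essentially the same approach as the paper, which simply cites Theorem \ref{thm:snp} and Theorem \ref{thm:IDP}. You are in fact a bit more careful than the paper's one-line proof in explicitly addressing part (i) of Conjecture \ref{conjecture:snp} (that $\Newt(ts)=t\cP$), which the paper leaves implicit.
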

\begin{proof}
    This follows from Theorem \ref{thm:snp} and Theorem \ref{thm:IDP}.
\end{proof}

% In addition to previous results, the proof for this statement involves arguing that for $(x,y,z)\in \conv(\lambda,\mu,\iota)$, if $\alpha:=(t-(t\lambda_3-z))\lambda+(t\lambda_3-z)\mu$, we have $(x,y,z)\in \Newt(s_{\alpha})$. Seeing this requires a bit of algebraic simplification. For instance, one can verify that the third coordinate of $\alpha$ is $z$. Thus, for $(x,y,z)$ to be in $\Newt(s_{\alpha})$, it suffices to demonstrate that $x$ is less than the first coordinate of $\a$, which follows from observations we omit here. 

% \begin{proposition}
%     Case $|n_\lambda-n_\mu|=1$
% \end{proposition}
% \begin{proof}
%     Suppose $S=S_{\lambda}+S_{\mu}$, where $\lambda$ and $\mu$ are partitions with sizes $n_\lambda$ and $n_\mu$ respectively so that $|n_\lambda-n_\mu|\leq 1$. Further assume that $\lambda$ and $\mu$ are 

%     Case 1: Say $n_\lambda=n_\mu$. 
% \end{proof}

% \begin{proposition}
%     Case $|n_\lambda-n_\mu|>1$, we show these partitions are not maximal.
% \end{proposition}

% \usepackage{appendix}

\appendix

\section{Sage}\label{app:sage}

The following was used to aid in computations. To be used for Lemma \ref{lem:same_size_differ_1}, we let $\lambda_1=a$, $\lambda_2=b$, $\lambda_3=c$, $\mu_1=d$, $\mu_2=e$, and $\mu_3=f$. Thus, the matrix $A$ is $[\lambda \ \mu \ \iota]$. Note in place of $n-\mu_1-\lambda_3$ we use $\lambda_1+\lambda_2-\mu_1$, keeping in mind that $n=\lambda_1+\lambda_2+\lambda_3$, and this identity was used extensively in translating and simplifying the following output.

\begin{python}
var("a","b","c","d","e","f")
A=Matrix([[a,d,d],[b,e,a+b-d],[c,f,c]])    
print(A.determinant().expand())

\end{python}
\begin{verbatim}
a*c*d - c*d^2 + a*c*e - c*d*e - a^2*f - a*b*f + a*d*f + b*d*f
\end{verbatim}
\begin{python}
B=A.inverse()
gamma=vector([d+1,a+b-d,c-1])
X=vector(B*gamma)
print(X.simplify_full())
\end{python}

\begin{verbatim}
(-((a + b)*d - d^2 - (c + d)*e + (a + b - d)*f)/
(a*c*d - c*d^2 + (a*c - c*d)*e - (a^2 + a*b - (a + b)*d)*f), 
(a + b + c)/(c*d + c*e - (a + b)*f), 
-(c*d^2 - (a*c + b)*d - ((a - 1)*c - c*d - a)*e + (a^2 + (a - 1)*b - (a + b)*d)*f)/
(a*c*d - c*d^2 + (a*c - c*d)*e - (a^2 + a*b - (a + b)*d)*f))
\end{verbatim}

\bibliographystyle{alphaurl}
\bibliography{sample}

\end{document}